\newcounter{minutes}\setcounter{minutes}{\time}
\newcounter{hours}\setcounter{hours}{\time}
\newcommand{\real}{\operatorname{Re}}
\newcommand{\dt}{{\rm d}t}
\newcommand{\ds}{{\rm d}s}
\newcommand{\sign}{\operatorname{sgn}}
\title{Tur\'an type inequalities for Kr\"atzel functions}
\author[]{\'Arp\'ad Baricz$\dagger$}
\address{Department of Economics, Babe\c{s}-Bolyai University,
Cluj-Napoca 400591, Romania} \email{bariczocsi@yahoo.com}
\author[]{Dragana Jankov}
\address{Dragana Jankov : Department of Mathematics, University of Osijek, 31000 Osijek,
Croatia} \email{djankov@mathos.hr}
\author[\'A. Baricz, D. Jankov, T.K. Pog\'any]{Tibor K. Pog\'any}
\address{Faculty of Maritime Studies, University of Rijeka, Rijeka 51000, Croatia}
\email{poganj@brod.pfri.hr}
\keywords{Kr\"atzel functions, Modified Bessel functions, Tur\'an
type inequalities, Complete monotonicity, Vandermonde determinant,
Tur\'an determinant, H\"older-Rogers and Chebyshev integral
inequality.} \subjclass[2000]{Primary 33C15, Secondary 26D07.}
\thanks{$\dagger$The research
of \'A. Baricz was supported by the J\'anos Bolyai Research
Scholarship of the Hungarian Academy of Sciences and by the Romanian
National Authority for Scientific Research CNCSIS-UEFISCSU, project
number PN-II-RU-PD\underline{ }388/2010.}
\newtheorem{theorem}{Theorem}
\newtheorem*{conjecture}{Conjecture}
\begin{document}
\begin{center}
\texttt{File:~\jobname .tex,
          printed: \number\year-0\number\month-\number\day,
          \thehours.\ifnum\theminutes<10{0}\fi\theminutes}
\end{center}
\maketitle

\begin{abstract}
Complete monotonicity, Laguerre and Tur\'an type inequalities are
established for the so-called Kr\"atzel function $Z_{\rho}^{\nu},$
defined by
$$Z_{\rho}^{\nu}(u)=\int_0^{\infty}t^{\nu-1}e^{-t^{\rho}-\frac{u}{t}}\dt,$$
where $u>0$ and $\rho,\nu\in\mathbb{R}.$ Moreover, we prove the
complete monotonicity of a determinant function of which entries
involve the Kr\"atzel function.
\end{abstract}

\section{\bf Introduction}
\setcounter{equation}{0}

In 1941 while studying the zeros of Legendre polynomials, the
Hungarian mathematician Paul Tur\'an discovered the following
inequality
$$\left[P_{n}(x)\right]^2>P_{n-1}(x)P_{n+1}(x),$$
where $|x|<1,$ $n\in\{1,2,\dots\}$ and $P_n$ stands for the
classical Legendre polynomial. This inequality was published by P.
Tur\'an only in 1950 in \cite{turan}. However, since the publication
in 1948 by G. Szeg\H o \cite{szego} of the above famous Tur\'an
inequality for Legendre polynomials, many authors have deduced
analogous results for classical (orthogonal) polynomials and special
functions. In the last 62 years it has been shown by several
researchers that the most important (orthogonal) polynomials (e.g.
Laguerre, Hermite, Appell, Bernoulli, Jacobi, Jensen, Pollaczek,
Lommel, Askey-Wilson, ultraspherical polynomials) and special
functions (e.g. Bessel, modified Bessel, gamma, polygamma, Riemann
zeta functions) satisfy a Tur\'an inequality. In 1981 one of the PhD
students of P. Tur\'an, L. Alp\'ar \cite{alpar} in Tur\'an's
biography mentioned that the above Tur\'an inequality had a
wide-ranging effect, this inequality was dealt with in more than 60
papers. The Tur\'an type inequalities now have a more extensive
literature and recently some of the results have been applied
successfully in problems that arise in information theory, economic
theory and biophysics. Motivated by these applications, the Tur\'an
type inequalities have recently come under the spotlight once again
and it has been shown that, for example, the Gauss and Kummer
hypergeometric functions, as well as the generalized hypergeometric
functions, satisfy naturally some Tur\'an type inequalities. For the
most recent papers on this subject we refer to \cite{alzer},
\cite{baricz0}, \cite{baricz1}, \cite{baricz2}, \cite{baricz3},
\cite{baricz4}, \cite{baricz5}, \cite{baricz6}, \cite{baricz7},
\cite{bepe}, \cite{barnard}, \cite{dimitrov}, \cite{karp},
\cite{laforgiaism}, \cite{segura}. For more details see also the
references therein.

Motivated by the above immense research on Tur\'an type
inequalities, in this paper our aim is to deduce complete
monotonicity, lower bounds and Tur\'an type inequalities for the
so-called Kr\"atzel function, defined below. The Kr\"atzel function
is defined for $u>0,$ $\rho\in\mathbb{R}$ and $\nu\in\mathbb{C},$
being such that $\real (\nu)<0$ for $\rho\leq0,$ by the integral
\begin{equation}\label{kra}Z_{\rho}^{\nu}(u)=\int_0^{\infty}t^{\nu-1}e^{-t^{\rho}-\frac{u}{t}}\dt.\end{equation}
For $\rho\geq1$ the function \eqref{kra} was introduced by E.
Kr\"atzel \cite{kratzel} as a kernel of the integral transform
$$\left(K_{\nu}^{\rho}f\right)(u)=\int_0^{\infty}Z_{\rho}^{\nu}(ut)f(t)\dt,$$
which was applied to the solution of some ordinary differential
equations. The study of the Kr\"atzel function \eqref{kra} and the
above integral transform were continued by several authors. For
example, in \cite{kilbas2} the authors deduced explicit forms of
$Z_{\rho}^{\nu}$ in terms of the generalized Wright function, while
in \cite{kilbas} the authors obtained the asymptotic behavior of
this function at zero and infinity and gave applications to
evaluation of integrals involving $Z_{\rho}^{\nu}.$ Such
investigations now are of a great interest in connection with
applications, see \cite{kilbas} and \cite{kilbas2} and the
references therein for more details. We note that the Kr\"atzel
function occurs in the study of astrophysical thermonuclear
functions, which are derived on the basis of Boltzmann-Gibbs
statistical mechanics, see \cite{saxena}. It is also important to
note that the Kr\"atzel function $Z_1^{\nu}$ is related to the
modified Bessel function of the second kind $K_{\nu}.$ More
precisely, in view of the formula \cite[p. 237]{askey}
\begin{equation}\label{integ}K_{\nu}(u)=\frac{u^{\nu}}{2^{\nu+1}}\int_0^{\infty}t^{-\nu-1}e^{-t-\frac{u^2}{4t}}\dt\end{equation}
we have that for all $u>0$ and $\nu\in\mathbb{C}$
$$Z_1^{\nu}\left(\frac{u^2}{4}\right)=2\left(\frac{u}{2}\right)^{\nu}K_{-\nu}(u)=2\left(\frac{u}{2}\right)^{\nu}K_{\nu}(u)$$
and consequently
\begin{equation}\label{re}Z_1^{\nu}(u)=2u^{\nu/2}K_{\nu}(2\sqrt{u}).\end{equation}
Note that this function is useful in chemical physics. More
precisely, the function $$u\mapsto
2^{\nu-1}Z_1^{\nu}(uq^2/4)=(q\sqrt{u})^{\nu}K_{\nu}(q\sqrt{u})$$ is
related to the Hartree-Frock energy and is used as a basis function
for the helium isoelectronic series. See \cite{bish} and
\cite{bishop} for more details. Moreover, the function $$u\mapsto
\sqrt{2/\pi}2^{\nu-1}Z_1^{\nu}(u^2/4)=\sqrt{2/\pi}u^{\nu}K_{\nu}(u)$$
is called in the literature as reduced Bessel function and plays an
important role in theoretical chemistry. See \cite{weniger} and the
references therein for more details.

This paper is organized as follows: in the next section we present
some monotonicity, log-convexity properties, complete monotonicity
and lower bounds for the Kr\"atzel function, while in the third
section we prove the complete monotonicity of a Tur\'an determinant
which entries involve the Kr\"atzel function $Z_{\rho}^{\nu}.$ The
main result of the third section is actually a generalization of a
Tur\'an-type inequality, of which counterpart is a conjecture at the
end of this paper.

We close these preliminaries with the following definitions, which
will be used in the sequel. A function
$f:(0,\infty)\rightarrow\mathbb{R}$ is said to be completely
monotonic if $f$ has derivatives of all orders and satisfies
$$(-1)^mf^{(m)}(u)\geq 0$$ for all $u>0$ and $m\in\{0,1,\dots\}.$ A
function $g:(0,\infty)\to(0,\infty)$ is said to be logarithmically
convex, or simply log-convex, if its natural logarithm $\ln g$ is
convex, that is, for all $u_1,u_2>0$ and $\alpha\in[0,1]$ we have
$$g(\alpha u_1+(1-\alpha)u_2)\leq\left[g(u_1)\right]^{\alpha}\left[g(u_2)\right]^{1-\alpha}.$$
Note that every completely monotonic function is log-convex, see
\cite[p. 167]{widder}.

\section{\bf Monotonicity and convexity properties of the Kr\"atzel
functions} \setcounter{equation}{0}

Our first main result reads as follows:

\begin{theorem}\label{th1}
If $\nu,\rho\in\mathbb{R}$ and $u>0,$ then the following assertions
are true:
\begin{enumerate}
\item[\bf a.] The Kr\"atzel function $Z_{\rho}^{\nu}$ satisfies the recurrence relation
\begin{equation}\label{rec}\nu
Z_{\rho}^{\nu}(u)=\rho
Z_{\rho}^{\nu+\rho}(u)-uZ_\rho^{\nu-1}(u).\end{equation}
\item[\bf b.] The function $u\mapsto Z_{\rho}^{\nu}(u)$ is completely
monotonic on $(0,\infty).$
\item[\bf c.] The function $\nu\mapsto Z_{\rho}^{\nu}(u)$ is log-convex on $\mathbb{R}.$
\item[\bf d.] The function $u\mapsto Z_{\rho}^{\nu}(u)$ is log-convex on $(0,\infty).$
\item[\bf e.] For $n\in\{1,2,\dots\}$ the following Laguerre type
inequality holds
\begin{equation}\label{lag}\left[\left[Z_{\rho}^{\nu}(u)\right]^{(n)}\right]^2
\leq\left[Z_{\rho}^{\nu}(u)\right]^{(n-1)}\left[Z_{\rho}^{\nu}(u)\right]^{(n+1)}.\end{equation}
\item[\bf f.] Suppose that $\rho>0.$ Then the following inequality holds
\begin{equation}\label{bou1}
Z_{\rho}^{-\nu}(u)\geq 2u^{\frac{1}{2}-\nu}\Gamma(\nu)K_1(2\sqrt{u})
\end{equation}
provided that $\nu\geq1.$ Moreover, if $0<\nu\leq1,$ then the above
inequality is reversed. In particular, the inequality
\begin{equation}\label{bouli}u^{\nu-1}K_{\nu}(u)\geq2^{\nu-1}\Gamma(\nu)K_1(u)\end{equation}
is valid for all $\nu\geq 1.$ If $0<\nu\leq 1,$ then the above
inequality is reversed. In the above inequality equality holds if
$u$ tends to zero or $\nu=1.$
\end{enumerate}
\end{theorem}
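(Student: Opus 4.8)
The plan is to establish the six assertions in turn, with parts (b)--(e) all flowing from differentiation under the integral sign together with two classical integral inequalities, and part (f) from the Chebyshev integral inequality. For (a) I would integrate over $(0,\infty)$ the elementary identity $\frac{d}{dt}\left[t^{\nu}e^{-t^{\rho}-u/t}\right]=\nu t^{\nu-1}e^{-t^\rho-u/t}-\rho t^{\nu+\rho-1}e^{-t^\rho-u/t}+u t^{\nu-2}e^{-t^\rho-u/t}$; the boundary contribution vanishes and the three resulting integrals are exactly $\nu Z_\rho^\nu(u)$, $\rho Z_\rho^{\nu+\rho}(u)$ and $uZ_\rho^{\nu-1}(u)$, which is \eqref{rec}. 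For (b), differentiating \eqref{kra} $m$ times under the integral sign (legitimate because, for $u>0$, the factor $e^{-u/t}$ forces super-exponential decay at $t=0$) yields $(-1)^m\left[Z_\rho^\nu(u)\right]^{(m)}=Z_\rho^{\nu-m}(u)$, a positive integral, so $u\mapsto Z_\rho^\nu(u)$ is completely monotonic.

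For (c) I would fix $u$ and apply the H\"older--Rogers inequality in the variable $\nu$: writing $\nu=\alpha\nu_1+(1-\alpha)\nu_2$ and factoring the integrand as $\left(t^{\nu_1-1}e^{-t^\rho-u/t}\right)^\alpha\left(t^{\nu_2-1}e^{-t^\rho-u/t}\right)^{1-\alpha}$, H\"older with exponents $1/\alpha,1/(1-\alpha)$ gives $Z_\rho^{\alpha\nu_1+(1-\alpha)\nu_2}(u)\le[Z_\rho^{\nu_1}(u)]^\alpha[Z_\rho^{\nu_2}(u)]^{1-\alpha}$. Part (d) is then immediate from (b), since every completely monotonic function is log-convex (as recalled in the introduction); alternatively the same H\"older split in $u$ works. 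For (e) I would combine (b) and (c): by (b) the Laguerre inequality \eqref{lag} is equivalent, after cancelling the signs, to $\left[Z_\rho^{\nu-n}(u)\right]^2\le Z_\rho^{\nu-n-1}(u)Z_\rho^{\nu-n+1}(u)$, which is precisely the log-convexity of $\nu\mapsto Z_\rho^\nu(u)$ from (c) at the consecutive points $\nu-n-1,\nu-n,\nu-n+1$.

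The heart of the matter is (f). First I would symmetrise \eqref{kra} by the substitution $s=u/t$, obtaining $Z_\rho^{-\nu}(u)=u^{-\nu}\int_0^\infty s^{\nu-1}e^{-s}e^{-(u/s)^\rho}\,ds$. Against the probability weight $e^{-s}\,ds$ on $(0,\infty)$ I would apply the Chebyshev integral inequality to the two functions $s\mapsto s^{\nu-1}$ and $s\mapsto e^{-(u/s)^\rho}$. The latter is increasing for every $\rho>0$, while the former is increasing precisely when $\nu\ge1$ and decreasing when $0<\nu\le1$. Hence for $\nu\ge1$ the two factors are similarly ordered and Chebyshev gives $\int_0^\infty e^{-s}s^{\nu-1}e^{-(u/s)^\rho}\,ds\ge\left(\int_0^\infty e^{-s}s^{\nu-1}\,ds\right)\left(\int_0^\infty e^{-s}e^{-(u/s)^\rho}\,ds\right)$, while for $0<\nu\le1$ the ordering is opposite and the inequality reverses. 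Recognising the three integrals as $u^\nu Z_\rho^{-\nu}(u)$, $\Gamma(\nu)$ and $uZ_\rho^{-1}(u)$ respectively, this reads $Z_\rho^{-\nu}(u)\ge\Gamma(\nu)u^{1-\nu}Z_\rho^{-1}(u)$ (reversed for $0<\nu\le1$), and the standard equality analysis for Chebyshev — one factor constant — pins down equality exactly at $\nu=1$ and in the limit $u\to0$.

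Finally I would specialise $\rho=1$ and invoke \eqref{re}: since $Z_1^{-1}(u)=2u^{-1/2}K_1(2\sqrt u)$ and $Z_1^{-\nu}(u)=2u^{-\nu/2}K_\nu(2\sqrt u)$, the right-hand side becomes $2u^{1/2-\nu}\Gamma(\nu)K_1(2\sqrt u)$, and after rewriting with $w=2\sqrt u$ this is exactly \eqref{bouli}, with the equality cases inherited from Chebyshev. The step I expect to be the main obstacle is reconciling the Chebyshev output with the statement of \eqref{bou1}: the argument naturally produces $\Gamma(\nu)u^{1-\nu}Z_\rho^{-1}(u)$, and replacing $Z_\rho^{-1}(u)$ by the Bessel value $2u^{-1/2}K_1(2\sqrt u)$ is transparent only for $\rho=1$, so the general-$\rho$ form printed in \eqref{bou1} hinges on the separate comparison of $Z_\rho^{-1}(u)$ with $Z_1^{-1}(u)$. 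Getting the monotonicity directions right — which is what fixes both the sign of the inequality and its reversal for $0<\nu\le1$, and hence the equality cases — is the other point that needs care.
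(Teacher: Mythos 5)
Your parts \textbf{a}--\textbf{e} follow the paper's proof essentially verbatim: integration by parts for \eqref{rec}; differentiation under the integral giving $(-1)^m\left[Z_{\rho}^{\nu}(u)\right]^{(m)}=Z_{\rho}^{\nu-m}(u)$ for \textbf{b} (the paper's primary argument is the Laplace-transform representation \eqref{krala} plus Bernstein--Widder, but it records your direct computation as an alternative); the H\"older--Rogers split in $\nu$ and in $u$ for the two log-convexity statements; and the reduction of the Laguerre inequality \eqref{lag} to the Tur\'an inequality for $\nu\mapsto Z_{\rho}^{\nu}(u)$ at consecutive arguments. For \textbf{f} you also take the paper's route: Chebyshev's integral inequality against an exponential weight applied to $s\mapsto s^{\nu-1}$ and an increasing second factor. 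Your substitution $s=u/t$ and the paper's choice $p(t)=e^{-ut}$, $f(t)=t^{\nu-1}$, $g(t)=e^{-t^{-\rho}}$ on \eqref{krala} (followed by \eqref{rec} at $\nu=0$, which gives $\rho Z_{\rho}^{\rho}(u)=uZ_{\rho}^{-1}(u)$) lead to the same intermediate bound $Z_{\rho}^{-\nu}(u)\geq\Gamma(\nu)u^{1-\nu}Z_{\rho}^{-1}(u)$ for $\nu\geq1$, reversed for $0<\nu\leq1$, with your equality analysis ($f$ constant at $\nu=1$, $g\to1$ as $u\to0$) matching the paper's.

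The obstacle you flag at the end is genuine, and you should not regard it as a defect of your write-up relative to the printed one: passing from $\Gamma(\nu)u^{1-\nu}Z_{\rho}^{-1}(u)$ to the right-hand side of \eqref{bou1} requires $Z_{\rho}^{-1}(u)=Z_1^{-1}(u)=2u^{-1/2}K_1(2\sqrt{u})$, which by \eqref{re} is an identity only for $\rho=1$. The paper's proof makes exactly this silent identification, writing $\rho Z_{\rho}^{\rho}(u)=uZ_1^{-1}(u)$ where the recurrence only yields $uZ_{\rho}^{-1}(u)$; for $\rho\neq1$ these quantities differ. So both your argument and the paper's rigorously establish the intermediate inequality for all $\rho>0$ and the stated inequalities \eqref{bou1} and \eqref{bouli} in the case $\rho=1$ (via \eqref{even} and $K_{\nu}=K_{-\nu}$), while the general-$\rho$ form of \eqref{bou1} as printed would additionally require a comparison of $Z_{\rho}^{-1}(u)$ with $Z_1^{-1}(u)$ that neither proof supplies.
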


We note here that similar result to that of \eqref{bouli} was proved
by M.E.H. Ismail \cite{ismail}. More precisely, M.E.H. Ismail proved
among others that for all $u>0$ and $\nu>1/2$ the inequality
\begin{equation}\label{eqism}
u^{\nu}K_{\nu}(u)e^{u}>2^{\nu-1}\Gamma(\nu)
\end{equation}
is valid and it is sharp as $u\to0.$ It should be mentioned here
that for $\nu\geq1$ the inequality \eqref{bouli} is better than
\eqref{eqism} since for all $\nu\geq 1$ and $u>0$ we have
$$u^{\nu}K_{\nu}(u)\geq 2^{\nu-1}\Gamma(\nu)uK_1(u)>2^{\nu-1}\Gamma(\nu)e^{-u}.$$
Observe that in fact \eqref{eqism} and the later inequality follow
from the fact (see \cite{laforgia}) that the function $$u\mapsto
u^{\nu}e^uK_{\nu}(u)$$ is strictly increasing on $(0,\infty)$ for
all $\nu>1/2$ and consequently for all $u>0$ we have
$ue^uK_{1}(u)>1.$ Here we used tacitly that when $\nu>0$ is fixed
and $u$ tends to zero, the asymptotic relation
$$u^{\nu}K_{\nu}(u)\sim 2^{\nu-1}\Gamma(\nu)$$ holds.

\begin{proof}[\bf Proof of Theorem \ref{th1}]
{\bf a.} By using \eqref{re} and the recurrence relation \cite[p.
79]{watson}
$$K_{\nu-1}(u)-K_{\nu+1}(u)=-\frac{2\nu}{u}K_{\nu}(u)$$ we obtain
$$uZ_1^{\nu-1}(u)-Z_1^{\nu+1}(u)=-\nu Z_1^{\nu}(u).$$ We note that
the above recurrence relation can be verified also by using
integration by parts as follows
\begin{align*}\nu
Z_1^{\nu}(u)&=\int_0^{\infty}\left(e^{-t-\frac{u}{t}}\right)\left(\nu
t^{\nu-1}\right)\dt\\
&=\int_0^{\infty}\left(1-\frac{u}{t^2}\right)\left(e^{-t-\frac{u}{t}}\right)t^{\nu}\dt\\&=Z_1^{\nu+1}(u)-uZ_1^{\nu-1}(u).\end{align*}
Moreover, by using the same idea we immediately have
\begin{align*}\nu
Z_{\rho}^{\nu}(u)&=\int_0^{\infty}\left(e^{-t^{\rho}-\frac{u}{t}}\right)\left(\nu
t^{\nu-1}\right)\dt\\&=\int_0^{\infty}\left(\rho
t^{\rho-1}-\frac{u}{t^2}\right)\left(e^{-t^{\rho}-\frac{u}{t}}\right)t^{\nu}\dt\\&=\rho
Z_{\rho}^{\nu+\rho}(u)-uZ_\rho^{\nu-1}(u).\end{align*}

{\bf b.} The change of variable $1/t=s$ in \eqref{kra} yields
\begin{equation}\label{krala}Z_{\rho}^{\nu}(u)=\int_0^{\infty}\left(s^{-\nu-1}e^{-s^{-\rho}}\right)e^{-us}\ds,\end{equation}
i.e. the Kr\"atzel function $Z_{\rho}^{\nu}$ is the Laplace
transform of the function $s\mapsto s^{-\nu-1}e^{-s^{-\rho}}.$ This
in view of the Bernstein-Widder theorem (see \cite{widder}) implies
that the function $u\mapsto Z_{\rho}^{\nu}(u)$ is completely
monotonic, i.e. for all $n\in\{0,1,\dots\},$ $\nu,\rho\in\mathbb{R}$
and $u>0$ we have
$$(-1)^n\left[Z_{\rho}^{\nu}(u)\right]^{(n)}>0.$$
We note that this can be verified also directly by using that for
all $n\in\{0,1,\dots\},$ $\nu,\rho\in\mathbb{R}$ and $u>0$
\begin{equation}\label{der}\left[Z_{\rho}^{\nu}(u)\right]^{(n)}=(-1)^nZ_{\rho}^{\nu-n}(u),\end{equation}
which follows via mathematical induction easily from \eqref{kra} or
\eqref{krala}.

{\bf c.} Recall the H\"older-Rogers inequality \cite[p. 54]{mitri},
that is,
\begin{equation}\label{holder}\int_a^b|f(t)g(t)|\dt\leq
{\left[\int_a^b|f(t)|^p\dt\right]}^{1/p}
{\left[\int_a^b|g(t)|^q\dt\right]}^{1/q},\end{equation} where $p>1,$
$1/p+1/q=1,$ $f$ and $g$ are real functions defined on $[a,b]$ and
$|f|^p,$ $|g|^q$ are integrable functions on $[a,b].$ Using
\eqref{kra} and \eqref{holder} we obtain that
\begin{align*}
Z_{\rho}^{\alpha\nu_1+(1-\alpha)\nu_2}(u)&=\int_0^{\infty}t^{\alpha\nu_1+(1-\alpha)\nu_2-1}e^{-t^{\rho}-\frac{u}{t}}\dt\\
&=\int_0^{\infty}t^{\alpha(\nu_1-1)+(1-\alpha)(\nu_2-1)}e^{-t^{\rho}-\frac{u}{t}}\dt\\
&=\int_0^{\infty}\left(t^{\nu_1-1}e^{-t^{\rho}-\frac{u}{t}}\right)^{\alpha}\left(t^{\nu_2-1}e^{-t^{\rho}-\frac{u}{t}}\right)^{1-\alpha}\dt\\
&\leq
\left[\int_0^{\infty}t^{\nu_1-1}e^{-t^{\rho}-\frac{u}{t}}\dt\right]^{\alpha}\left[\int_0^{\infty}t^{\nu_2-1}e^{-t^{\rho}-\frac{u}{t}}\dt\right]^{1-\alpha}\\
&=\left[Z_{\rho}^{\nu_1}(u)\right]^{\alpha}\left[Z_{\rho}^{\nu_2}(u)\right]^{1-\alpha}
\end{align*}
holds for all $\alpha\in[0,1],$ $\nu_1,\nu_2,\rho\in\mathbb{R}$ and
$u>0,$ i.e. the function $\nu\mapsto Z_{\rho}^{\nu}(u)$ is
log-convex on $\mathbb{R}.$

{\bf d.} This follows from the fact that the integrand in
\eqref{kra} or \eqref{krala} is a log-linear function of $u$ and by
using the H\"older-Rogers inequality \eqref{holder} we have that
\begin{align*}
Z_{\rho}^{\nu}(\alpha u_1+(1-\alpha) u_2)&=\int_0^{\infty}t^{\nu-1}e^{-t^{\rho}-\frac{1}{t}(\alpha u_1+(1-\alpha)u_2)}\dt\\
&=\int_0^{\infty}\left(t^{\nu-1}e^{-t^{\rho}-\frac{u_1}{t}}\right)^{\alpha}\left(t^{\nu-1}e^{-t^{\rho}-\frac{u_2}{t}}\right)^{1-\alpha}\dt\\
&\leq
\left[\int_0^{\infty}t^{\nu-1}e^{-t^{\rho}-\frac{u_1}{t}}\dt\right]^{\alpha}\left[\int_0^{\infty}t^{\nu-1}e^{-t^{\rho}-\frac{u_2}{t}}\dt\right]^{1-\alpha}\\
&=\left[Z_{\rho}^{\nu}(u_1)\right]^{\alpha}\left[Z_{\rho}^{\nu}(u_2)\right]^{1-\alpha}
\end{align*}
holds for all $\alpha\in[0,1],$ $\nu,\rho\in\mathbb{R}$ and
$u_1,u_2>0,$ i.e. the function $u\mapsto Z_{\rho}^{\nu}(u)$ is
log-convex on $(0,\infty).$

Alternatively, we may use part {\bf b} of this theorem. More
precisely, it is known that every completely monotonic function is
log-convex (see \cite[p. 167]{widder}), and then in view of part
{\bf b} the Kr\"atzel function $Z_{\rho}^{\nu}$ is log-convex on
$(0,\infty).$ Moreover, as a third proof we may use part {\bf c} of
this theorem. Namely, since the function $\nu\mapsto
Z_{\rho}^{\nu}(u)$ is log-convex, it follows that the following
Tur\'an type inequality holds for all
$\nu_1,\nu_2,\rho\in\mathbb{R}$ and $u>0$
\begin{equation}\label{tur}\left[Z_{\rho}^{\frac{\nu_1+\nu_2}{2}}(u)\right]^2\leq
Z_{\rho}^{\nu_1}(u)Z_{\rho}^{\nu_2}(u).\end{equation} Now, let
choose $\nu_1=\nu-2$ and $\nu_2=\nu,$ then we obtain the Tur\'an
type inequality
$$f_{\rho}^{\nu}(u)=\left[Z_{\rho}^{\nu-1}(u)\right]^2-Z_{\rho}^{\nu-2}(u)Z_{\rho}^{\nu}(u)\leq0,$$
which is valid for all $\nu,\rho\in\mathbb{R}$ and $u>0.$ This in
turn together with \eqref{der} implies that
$$\left[\frac{\left[Z_{\rho}^{\nu}(u)\right]'}{Z_{\rho}^{\nu}(u)}\right]'=
-\left[\frac{Z_{\rho}^{\nu-1}(u)}{Z_{\rho}^{\nu}(u)}\right]'=-\frac{f_{\rho}^{\nu}(u)}{\left[Z_{\rho}^{\nu}(u)\right]^2}\geq0,$$
i.e. the function $u\mapsto
\left[Z_{\rho}^{\nu}(u)\right]'/Z_{\rho}^{\nu}(u)$ is increasing on
$(0,\infty)$ for all $\nu,\rho\in\mathbb{R}.$

{\bf e.} This follows also from part {\bf c} of this theorem. More
precisely, in view of \eqref{der} the Laguerre type inequality
\eqref{lag} is equivalent to the Tur\'an type inequality
$$\left[Z_{\rho}^{\nu-n}(u)\right]^2\leq Z_{\rho}^{\nu-n-1}(u)Z_{\rho}^{\nu-n+1}(u),$$
which clearly follows from \eqref{tur} by choosing $\nu_1=\nu-n-1$
and $\nu_2=\nu-n+1.$

{\bf f.} Let us recall the Chebyshev integral inequality \cite[p.
40]{mitri}: If $f,g:[a,b]\rightarrow\mathbb{R}$ are integrable
functions, both increasing or both decreasing and
$p:[a,b]\rightarrow\mathbb{R}$ is a positive integrable function,
then
\begin{equation}\label{csebisev}
\int_a^bp(t)f(t)\dt\int_a^bp(t)g(t)\dt\leq
\int_a^bp(t)\dt\int_a^bp(t)f(t)g(t)\dt.
\end{equation}
Note that if one of the functions $f$ or $g$ is decreasing and the
other is increasing, then \eqref{csebisev} is reversed. We shall use
this inequality. For this by using \eqref{krala} let us write
$Z_{\rho}^{-\nu}(u)$ as
$$Z_{\rho}^{-\nu}(u)=\int_0^{\infty}e^{-ut}t^{\nu-1}e^{-t^{-\rho}}\dt$$
and let $p(t)=e^{-ut},$ $f(t)=t^{\nu-1}$ and $g(t)=e^{-t^{-\rho}}.$
Clearly $f$ is increasing (decreasing) on $(0,\infty)$ if and only
if $\nu\geq1$ ($\nu\leq1$). Since $g'(t)/g(t)=\rho t^{-\rho-1},$ it
follows that $g$ is increasing if and only if $\rho>0.$ Moreover,
$$\int_0^{\infty}p(t)\dt=\int_0^{\infty}e^{-ut}\dt=\frac{1}{u}$$ and
$$\int_0^{\infty}p(t)f(t)\dt=\int_0^{\infty}e^{-ut}t^{\nu-1}\dt=u^{-\nu}\int_0^{\infty}e^{-s}s^{\nu-1}\ds=u^{-\nu}\Gamma(\nu).$$
Similarly, integration by parts and \eqref{krala} imply
$$\int_0^{\infty}p(t)g(t)\dt=\int_0^{\infty}e^{-ut}e^{-t^{-\rho}}\dt=
\frac{\rho}{u}\int_0^{\infty}t^{-\rho-1}e^{-t^{-\rho}}e^{-ut}\dt=\frac{\rho}{u}Z_{\rho}^{\rho}(u).$$
Now, by choosing $\nu=0$ in \eqref{rec}, and using \eqref{re} we
obtain that
$$\rho Z_{\rho}^{\rho}(u)=uZ_1^{-1}(u)=2\sqrt{u}K_1(2\sqrt{u})$$ and
appealing to the Chebyshev integral inequality \eqref{csebisev} the
proof of the inequality \eqref{bou1} is done.

Finally, observe that by using the relation \cite[p. 79]{watson}
$K_{\nu}(u)=K_{-\nu}(u)$ we obtain easily
\begin{equation}\label{even}Z_1^{-\nu}(u)=u^{-\nu}Z_1^{\nu}(u),\end{equation}
and if we let $\rho=1$ in \eqref{bou1}, then by using \eqref{even}
we immediately obtain \eqref{bouli}, and with this the proof is
complete.
\end{proof}

\section{\bf Tur\'an type inequalities for Kr\"atzel functions}
\setcounter{equation}{0}

Let us consider now the Tur\'an type inequality
\begin{equation}\label{tkgnew}\left[Z_{\rho}^{\nu}(u)\right]^2-Z_{\rho}^{\nu-\rho}(u)Z_{\rho}^{\nu+\rho}(u)<0,\end{equation}
which holds for all $\nu,\rho\in\mathbb{R}$ and $u>0.$ This
inequality is actually a particular case of part {\bf c} of Theorem
\ref{th1}. More precisely, by choosing $\nu_1=\nu-\rho$ and
$\nu_2=\nu+\rho$ in \eqref{tur} the proof of \eqref{tkgnew} is done.
However, we give here an alternative proof. Just observe that
$$\left[Z_{\rho}^{\nu}(u)\right]^2 -
Z_\rho^{\nu-\rho}(u)Z_\rho^{\nu+\rho}(u)= \frac12 \int_0^\infty
\int_0^\infty (ts)^{\nu-1}e^{-t^{\rho}-s^\rho -
   u\left(\frac1{t}+\frac1s\right)}\left[ 2- \left( t/s\right)^\rho - \left( s/t\right)^\rho\right]\dt \ds$$
   and by using the elementary inequality $(t/s)^{\rho}+(s/t)^{\rho}
\ge 2$, the integrand becomes negative, which proves \eqref{tkgnew}.
Moreover the above integral representation yields the following
complete monotonicity result: the function
$$u\mapsto \left|\begin{array}{cc}Z_{\rho}^{\nu-\rho}(u)&Z_{\rho}^{\nu}(u)\\Z_{\rho}^{\nu}(u)&Z_{\rho}^{\nu+\rho}(u)\end{array}\right|=
\frac12 \int_0^\infty \int_0^\infty (ts)^{\nu-1}e^{-t^{\rho}-s^\rho
-
   u\left(\frac1{t}+\frac1s\right)}\left[\left( t/s\right)^\rho +\left( s/t\right)^\rho-2\right]\dt \ds$$
is not only positive, but even completely monotonic on $(0,\infty)$
for all $\nu,\rho\in\mathbb{R}.$ The next result is an analogue of
\cite[Theorem 2.1]{laforgiaism} for the Tur\'an determinant of
Kr\"atzel functions and provides a generalization of the above
result and of part {\bf b} of Theorem \ref{th1}. Note that in view
of \eqref{re} the following result in particular for $\rho=1$ gives
better Tur\'an type inequalities for the modified Bessel function of
the second kind $K_{\nu}$ than \cite[Theorem 2.5]{laforgiaism}. For
more details, compare the first Tur\'an type inequality in
\cite[Remark 2.6]{laforgiaism} with the right-hand side of
\eqref{tmb} below.

\begin{theorem}\label{thvan}
If $\nu,\rho\in\mathbb{R}$ and $n\in\{1,2,\dots\},$ then the
function
$$u\mapsto A_{\rho,n}^{\nu}(u)=\left|\begin{array}{cccc}Z_{\rho}^{\nu-\rho}(u)&Z_{\rho}^{\nu}(u)&\dots&Z_{\rho}^{\nu+(n-1)\rho}(u)\\
Z_{\rho}^{\nu}(u)&Z_{\rho}^{\nu+\rho}(u)&\dots&Z_{\rho}^{\nu+n\rho}(u)\\\vdots&\vdots&
&\vdots\\Z_{\rho}^{\nu+(n-1)\rho}(u)&Z_{\rho}^{\nu+n\rho}(u)&\dots&Z_{\rho}^{\nu+(2n-1)\rho}(u)\end{array}\right|$$
is completely monotonic on $(0,\infty).$
\end{theorem}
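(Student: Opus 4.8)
The plan is to realise the $(n+1)\times(n+1)$ determinant $A_{\rho,n}^{\nu}(u)$ as a single symmetrized $(n+1)$-fold integral whose integrand is manifestly nonnegative and whose only dependence on $u$ is through an exponential factor $e^{-u\sum_k 1/t_k}$; complete monotonicity then drops out by differentiating under the integral sign.

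First I would use the integral representation \eqref{kra}. Indexing rows and columns by $i,j\in\{0,1,\dots,n\},$ the $(i,j)$ entry is $Z_{\rho}^{\nu+(i+j-1)\rho}(u),$ and since $t^{\nu+(i+j-1)\rho-1}=t^{\nu-\rho-1}(t^{\rho})^i(t^{\rho})^j,$ each entry is a moment
$$Z_{\rho}^{\nu+(i+j-1)\rho}(u)=\int_0^{\infty}\varphi_i(t)\varphi_j(t)w_u(t)\dt,\qquad \varphi_k(t)=(t^{\rho})^k,\quad w_u(t)=t^{\nu-\rho-1}e^{-t^{\rho}-u/t}.$$
This exhibits the matrix as the Gram matrix of $\varphi_0,\dots,\varphi_n$ with respect to the positive weight $w_u.$

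Next I would invoke the Andr\'eief--Heine identity (the Gram-determinant, or Cauchy--Binet, formula for integrals), which converts the determinant of these moments into
$$A_{\rho,n}^{\nu}(u)=\frac{1}{(n+1)!}\int_0^{\infty}\!\!\cdots\!\int_0^{\infty}\left[\det\left(\varphi_i(t_k)\right)_{i,k=0}^{n}\right]^2\prod_{k=0}^{n}w_u(t_k)\,\dt_0\cdots\dt_n.$$
The inner determinant is a Vandermonde determinant in the variables $t_k^{\rho},$ namely $\det\left((t_k^{\rho})^i\right)_{i,k}=\prod_{0\le j<k\le n}(t_k^{\rho}-t_j^{\rho}),$ so its square is nonnegative. (For $n=1$ this reproduces exactly the double integral displayed just before the theorem.) Together with $\varphi_k,w_u>0$ on $(0,\infty)$ this gives the nonnegative representation
$$A_{\rho,n}^{\nu}(u)=\frac{1}{(n+1)!}\int_0^{\infty}\!\!\cdots\!\int_0^{\infty}\prod_{0\le j<k\le n}(t_k^{\rho}-t_j^{\rho})^2\prod_{k=0}^{n}\left(t_k^{\nu-\rho-1}e^{-t_k^{\rho}}\right)e^{-u\sum_{k=0}^{n}1/t_k}\,\dt_0\cdots\dt_n.$$

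Finally, since $u$ enters only through $e^{-u\sum_k 1/t_k},$ differentiating $m$ times under the integral sign brings down the factor $(-1)^m\left(\sum_k 1/t_k\right)^m,$ whence
$$(-1)^m\left[A_{\rho,n}^{\nu}(u)\right]^{(m)}=\frac{1}{(n+1)!}\int_0^{\infty}\!\!\cdots\!\int_0^{\infty}\left(\sum_{k=0}^{n}\frac1{t_k}\right)^m\prod_{0\le j<k\le n}(t_k^{\rho}-t_j^{\rho})^2\prod_{k=0}^{n}\left(t_k^{\nu-\rho-1}e^{-t_k^{\rho}}\right)e^{-u\sum_{k=0}^{n}1/t_k}\,\dt_0\cdots\dt_n\ge0$$
for every $m\in\{0,1,\dots\},$ which is precisely complete monotonicity. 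The main obstacle is the middle step: beyond the (routine) algebraic recognition of the squared Vandermonde, one must justify the Andr\'eief identity rigorously, that is, establish absolute convergence of the multiple integral together with the legitimacy of differentiation under the integral sign. This requires controlling the combined $t_k\to 0^+$ and $t_k\to\infty$ behaviour of the polynomial Vandermonde factor against the weight (as encoded in the convergence conditions built into the definition of $Z_{\rho}^{\nu}$), so that Fubini's theorem and dominated convergence apply; once that is in place, the nonnegativity of the integrand completes the proof.
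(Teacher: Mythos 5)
Your proposal is correct and follows essentially the same route as the paper: the authors also reduce the determinant to the symmetrized $(n+1)$-fold integral of the squared Vandermonde $\prod\left(t_j^{\rho}-t_i^{\rho}\right)^2$ against the weight $\prod t_j^{\nu-\rho-1}e^{-t_j^{\rho}-u/t_j}$ and then differentiate under the integral sign. The only cosmetic difference is that you cite the Andr\'eief--Heine (Gram) identity, while the paper derives that identity by hand through the permutation-averaging argument.
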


\begin{proof}[\bf Proof]
By using \eqref{kra} we have
\begin{align*}
A_{\rho,n}^{\nu}(u)&=\left|\begin{array}{cccc}Z_{\rho}^{\nu-\rho}(u)&Z_{\rho}^{\nu}(u)&\dots&Z_{\rho}^{\nu+(n-1)\rho}(u)\\
Z_{\rho}^{\nu}(u)&Z_{\rho}^{\nu+\rho}(u)&\dots&Z_{\rho}^{\nu+n\rho}(u)\\\vdots&\vdots&
&\vdots\\Z_{\rho}^{\nu+(n-1)\rho}(u)&Z_{\rho}^{\nu+n\rho}(u)&\dots&Z_{\rho}^{\nu+(2n-1)\rho}(u)\end{array}\right|\\
&=\int_{[0,\infty)^{n+1}}\left|\begin{array}{cccc}t_0^{\nu-\rho-1}&t_0^{\nu-1}&\dots&t_0^{\nu+(n-1)\rho-1}\\
t_1^{\nu-1}&t_1^{\nu+\rho-1}&\dots&t_1^{\nu+n\rho-1}\\\vdots&\vdots&
&\vdots\\t_n^{\nu+(n-1)\rho-1}&t_n^{\nu+n\rho-1}&\dots&t_n^{\nu+(2n-1)\rho-1}\end{array}\right|\prod_{j=0}^{n}e^{-t_j^{\rho}-\frac{u}{t_j}}{\rm
d}t_0{\rm d}t_1\dots{\rm d}t_n\\
&=\int_{[0,\infty)^{n+1}}\left|\begin{array}{cccc}t_{\sigma(0)}^{\nu-\rho-1}&t_{\sigma(0)}^{\nu-1}&\dots&t_{\sigma(0)}^{\nu+(n-1)\rho-1}\\
t_{\sigma(1)}^{\nu-1}&t_{\sigma(1)}^{\nu+\rho-1}&\dots&t_{\sigma(1)}^{\nu+n\rho-1}\\\vdots&\vdots&
&\vdots\\t_{\sigma(n)}^{\nu+(n-1)\rho-1}&t_{\sigma(n)}^{\nu+n\rho-1}&\dots&t_{\sigma(n)}^{\nu+(2n-1)\rho-1}\end{array}\right|\prod_{j=0}^{n}e^{-t_j^{\rho}-\frac{u}{t_j}}{\rm
d}t_0{\rm d}t_1\dots{\rm d}t_n,
\end{align*}
where $\sigma$ is a permutation on $\{0,1,\dots,n\}.$ Now, let
$\sign(\sigma)$ be denote the sign of $\sigma$ and $S_n$ be the
symmetric group on $n$ symbols. Then we obtain
\begin{align*}
A_{\rho,n}^{\nu}(u)&=\int_{[0,\infty)^{n+1}}\left|\begin{array}{cccc}t_0^{0}&t_0^{\rho}&\dots&t_0^{n\rho}\\
t_1^{\rho}&t_1^{2\rho}&\dots&t_1^{(n+1)\rho}\\\vdots&\vdots&
&\vdots\\t_n^{n\rho}&t_n^{(n+1)\rho}&\dots&t_n^{2n\rho}\end{array}\right|\sign(\sigma)\prod_{j=0}^{n}t_j^{\nu-\rho-1}e^{-t_j^{\rho}-\frac{u}{t_j}}{\rm
d}t_0{\rm d}t_1\dots{\rm d}t_n\\
&=\int_{[0,\infty)^{n+1}}\left|\begin{array}{cccc}1&t_0^{\rho}&\dots&t_0^{n\rho}\\
1&t_1^{\rho}&\dots&t_1^{n\rho}\\\vdots&\vdots&
&\vdots\\1&t_n^{\rho}&\dots&t_n^{n\rho}\end{array}\right|\sign(\sigma)\left(t_0^0t_1^{\rho}\dots
t_n^{n\rho}\right)\prod_{j=0}^{n}\left(t_j^{\nu-\rho-1}e^{-t_j^{\rho}-\frac{u}{t_j}}\right){\rm
d}t_0{\rm d}t_1\dots{\rm d}t_n\\
&=\frac{1}{(n+1)!}\int_{[0,\infty)^{n+1}}\left|\begin{array}{cccc}1&t_0^{\rho}&\dots&t_0^{n\rho}\\
1&t_1^{\rho}&\dots&t_1^{n\rho}\\\vdots&\vdots&
&\vdots\\1&t_n^{\rho}&\dots&t_n^{n\rho}\end{array}\right|\prod_{j=0}^{n}\left(t_j^{\nu-\rho-1}e^{-t_j^{\rho}-\frac{u}{t_j}}\right)\\
&\ \ \times \sum_{\sigma\in
S_{n+1}}\sign(\sigma)t_0^{\sigma(0)}(t_1^{\rho})^{\sigma(1)}\dots
(t_n^{\rho})^{\sigma(n)}{\rm
d}t_0{\rm d}t_1\dots{\rm d}t_n\\
&=\frac{1}{(n+1)!}\int_{[0,\infty)^{n+1}}
\prod_{1\leq i<j\leq n}\left(t_j^{\rho}-t_i^{\rho}\right)\prod_{j=0}^{n}\left(t_j^{\nu-\rho-1}e^{-t_j^{\rho}-\frac{u}{t_j}}\right)\\
&\ \ \times \sum_{\sigma\in
S_{n+1}}\sign(\sigma)t_0^{\sigma(0)}(t_1^{\rho})^{\sigma(1)}\dots
(t_n^{\rho})^{\sigma(n)}{\rm d}t_0{\rm d}t_1\dots{\rm d}t_n\\
&=\frac{1}{(n+1)!}\int_{[0,\infty)^{n+1}} \prod_{1\leq i<j\leq
n}\left(t_j^{\rho}-t_i^{\rho}\right)^2\prod_{j=0}^{n}\left(t_j^{\nu-\rho-1}e^{-t_j^{\rho}-\frac{u}{t_j}}\right){\rm
d}t_0{\rm d}t_1\dots{\rm d}t_n,
\end{align*}
where we used that by Leibniz's formula the Vandermonde determinant
can be written as
$$\left|\begin{array}{cccc}1&t_0^{\rho}&\dots&t_0^{n\rho}\\
1&t_1^{\rho}&\dots&t_1^{n\rho}\\\vdots&\vdots&
&\vdots\\1&t_n^{\rho}&\dots&t_n^{n\rho}\end{array}\right|=\sum_{\sigma\in
S_{n+1}}\sign(\sigma)t_0^{\sigma(0)}(t_1^{\rho})^{\sigma(1)}\dots
(t_n^{\rho})^{\sigma(n)}=\prod_{1\leq i<j\leq
n}\left(t_j^{\rho}-t_i^{\rho}\right).$$ Summarizing
$$A_{\rho,n}^{\nu}(u)=\frac{1}{(n+1)!}\int_{[0,\infty)^{n+1}} e^{-\sum\limits_{j=0}^n\left(t_j^{\rho}+\frac{u}{t_j}\right)}\prod_{1\leq i<j\leq
n}\left(t_j^{\rho}-t_i^{\rho}\right)^2\prod_{j=0}^{n}t_j^{\nu-\rho-1}{\rm
d}t_0{\rm d}t_1\dots{\rm d}t_n$$ and then for all
$\nu,\rho\in\mathbb{R},$ $u>0$ and $m\in\{0,1,\dots\}$ we obtain
\begin{align*}0<(-1)^m\left[A_{\rho,n}^{\nu}(u)\right]^{(m)}=\frac{1}{(n+1)!}\int_{[0,\infty)^{n+1}}&\left(\sum_{j=0}^n\frac{1}{t_j}\right)^m
e^{-\sum\limits_{j=0}^n\left(t_j^{\rho}+\frac{u}{t_j}\right)}\\&\times\prod_{1\leq
i<j\leq
n}\left(t_j^{\rho}-t_i^{\rho}\right)^2\prod_{j=0}^{n}t_j^{\nu-\rho-1}{\rm
d}t_0{\rm d}t_1\dots{\rm d}t_n,\end{align*} which completes the
proof.
\end{proof}

Now, let us consider the function $\Phi_{\rho}^{\nu}:(0,\infty)\to
\mathbb{R},$ defined by
$$\Phi_{\rho}^{\nu}(u)=1-\frac{Z_{\rho}^{\nu-\rho}(u)Z_{\rho}^{\nu+\rho}(u)}{\left[Z_{\rho}^{\nu}(u)\right]^2}.$$
Recall the asymptotic expansion (see \cite{kilbas,kratzel})
$$Z_{\rho}^{\nu}(u)\sim \alpha_{\rho}^{\nu}u^{\frac{2\nu-\rho}{2(\rho+1)}}e^{-\beta_{\rho}u^{\frac{\rho}{\rho+1}}},$$
where
$$\alpha_{\rho}^{\nu}=\sqrt{\frac{2\pi}{\rho+1}}\rho^{1-\frac{2\nu+1}{2(\rho+1)}}\ \
\mbox{and}\ \
\beta_{\rho}=\left(1+{1}/{\rho}\right)\rho^{\frac{1}{\rho+1}},$$
which holds for large values of $u$ and fixed $\rho>0,$
$\nu\in\mathbb{R}.$ By using the above asymptotic relation we obtain
that $\lim_{u\to\infty}\Phi_{\rho}^{\nu}(u)=0,$ which shows that in
\eqref{tkgnew} the constant $0$ is the best possible. Moreover,
based on numerical experiments we believe, but are unable to prove
the following conjecture.
\begin{conjecture}
If $\nu>\rho>0,$ then the function $\Phi_{\rho}^{\nu}$ is strictly
increasing on $(0,\infty),$ and consequently the following Tur\'an
type inequality holds
\begin{equation}\label{tkg}
\rho/(\rho-\nu)\left[Z_{\rho}^{\nu}(u)\right]^2<
\left[Z_{\rho}^{\nu}(u)\right]^2-Z_{\rho}^{\nu-\rho}(u)Z_{\rho}^{\nu+\rho}(u).
\end{equation}
\end{conjecture}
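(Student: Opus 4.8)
Write $a=Z_\rho^{\nu-\rho}(u)$, $b=Z_\rho^{\nu}(u)$ and $c=Z_\rho^{\nu+\rho}(u)$, so that $\Phi_\rho^{\nu}=1-ac/b^2$. I would separate the statement into a boundary computation and a monotonicity statement, the latter being the real content. For the boundary values, note that $Z_\rho^{\mu}(0)=\int_0^{\infty}t^{\mu-1}e^{-t^{\rho}}\dt=\rho^{-1}\Gamma(\mu/\rho)$ whenever $\mu>0$; the hypothesis $\nu>\rho>0$ makes $a,b,c$ finite at $u=0$, and with $x=\nu/\rho$ the identity $\Gamma(x-1)\Gamma(x+1)=\frac{x}{x-1}\Gamma(x)^2$ yields
\[
\Phi_\rho^{\nu}(0^+)=1-\frac{\Gamma(x-1)\Gamma(x+1)}{\Gamma(x)^2}=1-\frac{x}{x-1}=\frac{\rho}{\rho-\nu},
\]
while the asymptotic relation quoted before the conjecture gives $\lim_{u\to\infty}\Phi_\rho^{\nu}(u)=0$. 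Consequently, once $\Phi_\rho^{\nu}$ is shown to be strictly increasing on $(0,\infty)$, one has $\Phi_\rho^{\nu}(u)>\Phi_\rho^{\nu}(0^+)=\rho/(\rho-\nu)$ for every $u>0$; multiplying by $[Z_\rho^{\nu}(u)]^2>0$ gives precisely \eqref{tkg}. Thus the entire problem is the strict monotonicity of $\Phi_\rho^{\nu}$.

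Next I would convert this monotonicity into a convexity statement in the order parameter. Put $R(\mu)=Z_\rho^{\mu-1}(u)/Z_\rho^{\mu}(u)$; by \eqref{der} we have $(\ln Z_\rho^{\mu}(u))'=-R(\mu)$, whence
\[
\bigl(\ln(ac/b^2)\bigr)'=-R(\nu-\rho)-R(\nu+\rho)+2R(\nu).
\]
Hence $\Phi_\rho^{\nu}$ is increasing if and only if $ac/b^2$ is decreasing, i.e. if and only if $R(\nu-\rho)+R(\nu+\rho)>2R(\nu)$, and this holds as soon as $\mu\mapsto R(\mu)$ is strictly convex. The decisive reformulation comes from the Laplace representation \eqref{krala}: the substitution $y=\ln s$ turns it into
\[
Z_\rho^{\mu}(u)=\int_{\mathbb{R}}e^{-\mu y-W(y)}\,{\rm d}y,\qquad W(y)=e^{-\rho y}+u e^{y},
\]
so that $R(\mu)=\mathbb{E}_\mu[e^{y}]$, the mean of $s=e^{y}$ under the tilted probability measure ${\rm d}P_\mu\propto e^{-\mu y}e^{-W(y)}\,{\rm d}y$. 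The crucial structural feature is that $W''(y)=\rho^2e^{-\rho y}+u e^{y}>0$, so every $P_\mu$ is \emph{strictly log-concave} in $y$. A direct differentiation in this exponential family gives the clean formula
\[
R''(\mu)=\mathrm{Cov}_{\mu}\!\bigl(e^{y},(y-\mathbb{E}_\mu[y])^2\bigr),
\]
so the whole theorem reduces to the following assertion.

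\emph{Core claim.} If $q$ is a log-concave probability density on $\mathbb{R}$ with mean $\bar y$, then $\mathrm{Cov}_q\bigl(e^{y},(y-\bar y)^2\bigr)\ge 0$, strictly unless $q$ is degenerate.

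This is the heart of the matter and the step I expect to be genuinely hard; it is presumably why the authors leave the statement as a conjecture. The claim is \emph{false} without log-concavity: writing the covariance as $\tfrac12\iint(e^{y}-e^{y'})(y-y')(y+y'-2\bar y)\,q(y)q(y')\,{\rm d}y\,{\rm d}y'$ and passing to $m=\tfrac{y+y'}2$, $d=y-y'$ gives
\[
\mathrm{Cov}_q\bigl(e^{y},(y-\bar y)^2\bigr)=4\int_0^{\infty}d\sinh\!\Bigl(\tfrac d2\Bigr)\left[\int_{\mathbb{R}}e^{m}(m-\bar y)\,q\!\Bigl(m-\tfrac d2\Bigr)q\!\Bigl(m+\tfrac d2\Bigr){\rm d}m\right]{\rm d}d,
\]
and for two equally weighted point masses the inner bracket vanishes identically, while for unequal masses it can be negative; so no purely formal manipulation can succeed. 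What must be used is exactly the log-concavity forced by the convexity of $W$ (this is where $\rho>0$ and the exponential structure of \eqref{kra} enter): for each fixed $d$ the kernel $m\mapsto q(m-\tfrac d2)q(m+\tfrac d2)=e^{-W(m-d/2)-W(m+d/2)}$ is again log-concave, which turns the inner integral into a shifted-mean inequality of the form ``the $e^{m}$-tilt of a log-concave density has mean at least $\bar y$.'' I would try to close this in one of two ways: either through the exponential-family identity that reduces the Core claim to $\bigl(\int_0^1 v(\theta)\,{\rm d}\theta\bigr)^2\ge v(0)-v(1)$, where $v(\theta)=\mathrm{Var}(y)$ along the tilt $e^{\theta y}$ interpolating $P_\mu$ and its $e^{y}$-tilt (so the point is a quantitative bound on how fast the variance of a log-concave measure may drop under tilting); or through a total-positivity / variation-diminishing argument exploiting that both $e^{-\mu y}$ and $e^{-W}$ are totally positive kernels. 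As a fall-back one may expand $(\Phi_\rho^{\nu})'$ directly from \eqref{kra} into a symmetric quadruple integral and seek a pointwise nonnegative symmetrization of the integrand, in the spirit of the proof of \eqref{tkgnew}; but the lack of an obvious sum-of-squares certificate there is the same obstacle in disguise. Finally, note that $\nu>\rho>0$ is needed only to make $Z_\rho^{\nu-\rho}(0)$ finite so that the boundary value $\rho/(\rho-\nu)$ is attained; the monotonicity of $\Phi_\rho^{\nu}$ itself, via the Core claim, would not need this restriction.
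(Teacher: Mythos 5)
You should know at the outset that the paper contains no proof of this statement: it is presented explicitly as a conjecture, supported only by numerical experiments, which the authors say they are unable to prove. The paper establishes only the two boundary facts (the limits $\Phi_{\rho}^{\nu}(u)\to\rho/(\rho-\nu)$ as $u\to0$ and $\Phi_{\rho}^{\nu}(u)\to0$ as $u\to\infty$, showing that the constants in \eqref{tkg} and \eqref{tkgnew} are best possible) and the case $\rho=1$, i.e. \eqref{tmb}, which is quoted from \cite{baricz7} and \cite{segura} rather than proved. Your boundary computation agrees with the paper's, and your reductions up to the Core claim are all correct: by \eqref{der}, $(\ln Z_{\rho}^{\mu}(u))'=-R(\mu)$ where $R(\mu)=Z_{\rho}^{\mu-1}(u)/Z_{\rho}^{\mu}(u)$, so that $\Phi_{\rho}^{\nu}$ increases exactly when $R(\nu-\rho)+R(\nu+\rho)>2R(\nu)$; and in the family ${\rm d}P_{\mu}\propto e^{-\mu y-W(y)}{\rm d}y$ with $W(y)=e^{-\rho y}+ue^{y}$ one indeed has $R(\mu)=\mathbb{E}_{\mu}[e^{y}]$ and $R''(\mu)=\mathrm{Cov}_{\mu}\bigl(e^{y},(y-\mathbb{E}_{\mu}[y])^{2}\bigr)$.

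The genuine gap is that your Core claim --- log-concavity of $q$ implies $\mathrm{Cov}_{q}\bigl(e^{y},(y-\bar y)^{2}\bigr)\geq0$ --- is false, so the program cannot be closed in the form you propose. Take $q(y)=e^{y}$ for $y\leq0$ and $q(y)=0$ for $y>0$: this is a log-concave probability density with $\bar y=-1$ and $\mathrm{Var}_{q}(y)=1$, yet
$$\mathrm{Cov}_{q}\bigl(e^{y},(y+1)^{2}\bigr)=\int_{-\infty}^{0}e^{2y}(y+1)^{2}\,{\rm d}y-\left(\int_{-\infty}^{0}e^{2y}\,{\rm d}y\right)\cdot1=\frac{1}{4}-\frac{1}{2}=-\frac{1}{4}<0,$$
and smoothing (e.g. $q_{\varepsilon}(y)\propto e^{y-e^{y/\varepsilon}}$ with $\varepsilon$ small) yields strictly log-concave, smooth counterexamples as well. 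Moreover, the failure occurs essentially at the boundary of your own family: for the density proportional to $e^{y-ue^{y}}$ (formally $\mu=-1$ with the barrier $e^{-\rho y}$ deleted, i.e. an exponential density in the variable $s=e^{y}$), a computation using $\Gamma'(2)=1-\gamma$ and $\Gamma''(2)=(1-\gamma)^{2}+\pi^{2}/6-1$, with $\gamma$ Euler's constant, shows that the covariance equals exactly $0$. So the inequality you need has no slack in certain limits of the family --- the cutoff $ue^{y}$ has exactly the same exponential rate as the tilting function $e^{y}$ --- and it fails the moment that rate is exceeded. Any proof must therefore exploit this precise two-sided structure of $W$, not merely its convexity; this also undercuts your closing remark that the restriction $\nu>\rho>0$ should be superfluous for the monotonicity. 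In short, your reformulation is sound and possibly useful, but its decisive lemma is not merely unproven: it is wrong as stated, which is consistent with the statement remaining an open conjecture in the paper.
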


Note that for $\nu,\rho>0$ fixed if $u$ tends to zero, then the
asymptotic relation (see \cite{kilbas,kratzel}) $$\rho
Z_{\rho}^{\nu}(u)\sim \Gamma(\nu/\rho)$$ is valid. Using this
relation we obtain
$$\lim\limits_{u\to 0}\Phi_{\rho}^{\nu}(u)=1-\frac{\Gamma\left({\nu}/{\rho}-1\right)\Gamma\left({\nu}/{\rho}+1\right)}{\Gamma^2\left({\nu}/{\rho}\right)}=\rho/(\rho-\nu)$$ for all
$\nu>\rho>0,$ which shows that in \eqref{tkg} the constant
$\rho/(\rho-\nu)$ is the best possible.

It is worthwhile to note here that in fact the inequality
\eqref{tkg} is motivated by the following result. If the above
conjecture were be true then \eqref{tkg} together with
\eqref{tkgnew} would yield a generalization of \eqref{tmb}, since
for $\rho=1$ the inequalities \eqref{tkg} and \eqref{tkgnew} reduce
to \eqref{tmb}.

\begin{theorem}
Let $K_{\nu}$ be the modified Bessel function of the second kind.
Then the following Tur\'an type inequalities hold for all $\nu>1$
and $u>0$
\begin{equation}\label{tmb}
1/(1-\nu)\left[K_{\nu}(u)\right]^2<\left[K_{\nu}(u)\right]^2-K_{\nu-1}(u)K_{\nu+1}(u)<0.
\end{equation}
Moreover, the right-hand side of \eqref{tmb} holds true for all
$\nu\in\mathbb{R}.$ These inequalities are sharp in the sense that
the constants $1/(1-\nu)$ and $0$ are the best
possible.\end{theorem}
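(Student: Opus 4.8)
The plan is to deduce \eqref{tmb} almost entirely from the monotonicity of a single ratio, splitting off the part that is essentially free. The right-hand inequality, together with its validity for every $\nu\in\mathbb{R}$, is immediate: specialising \eqref{tkgnew} to $\rho=1$ and invoking \eqref{re}, namely $Z_1^{\nu}(u)=2u^{\nu/2}K_{\nu}(2\sqrt{u})$, the powers of $u$ cancel and \eqref{tkgnew} becomes exactly $[K_{\nu}(2\sqrt u)]^2-K_{\nu-1}(2\sqrt u)K_{\nu+1}(2\sqrt u)<0$; since $2\sqrt u$ ranges over all of $(0,\infty)$, this is the right-hand side of \eqref{tmb} for all real $\nu$ and all $u>0$. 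It remains to handle the left-hand inequality for $\nu>1$ and the two sharpness statements.

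Set $g(u)=K_{\nu-1}(u)K_{\nu+1}(u)/[K_{\nu}(u)]^2$. I claim the whole of \eqref{tmb} for $\nu>1$, as well as both sharpness assertions, follows once $g$ is shown to be strictly decreasing on $(0,\infty)$ with $\lim_{u\to0^+}g(u)=\nu/(\nu-1)$ and $\lim_{u\to\infty}g(u)=1$. Indeed, a strictly decreasing $g$ tending to $1$ forces $g(u)>1$ for every finite $u$, which is the right-hand side, while tending down from $\nu/(\nu-1)$ forces $g(u)<\nu/(\nu-1)$; and since $1-\nu/(\nu-1)=1/(1-\nu)$, the bound $g(u)<\nu/(\nu-1)$ is precisely $\tfrac{1}{1-\nu}[K_\nu(u)]^2<[K_\nu(u)]^2-K_{\nu-1}(u)K_{\nu+1}(u)$, the left-hand side. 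The two limits come from the classical asymptotics: as $u\to0^+$ one has $K_{\mu}(u)\sim 2^{\mu-1}\Gamma(\mu)u^{-\mu}$ (this needs $\nu>1$ so that $K_{\nu-1}$ has this behaviour), whence $g(u)\to\Gamma(\nu-1)\Gamma(\nu+1)/\Gamma(\nu)^2=\nu/(\nu-1)$, and as $u\to\infty$ all three factors behave like $\sqrt{\pi/(2u)}\,e^{-u}$, so $g(u)\to1$. This simultaneously shows the constants $\nu/(\nu-1)$ (equivalently $1/(1-\nu)$) and $0$ are best possible.

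The heart of the matter is the monotonicity of $g$. Writing $R_{\mu}(u)=K_{\mu+1}(u)/K_{\mu}(u)$ and logarithmically differentiating, the derivative relation $K_{\mu}'=-\tfrac12(K_{\mu-1}+K_{\mu+1})$ gives $K_{\mu}'/K_{\mu}=-\tfrac12(1/R_{\mu-1}+R_{\mu})$; feeding this into $g'/g=K_{\nu-1}'/K_{\nu-1}+K_{\nu+1}'/K_{\nu+1}-2K_{\nu}'/K_{\nu}$ and using the recurrence $1/R_{\mu-1}=R_{\mu}-2\mu/u$ (which is $K_{\mu-1}-K_{\mu+1}=-\tfrac{2\mu}{u}K_{\mu}$ divided by $K_{\mu}$) makes every $1/u$ term cancel and leaves the clean identity
$$\frac{g'(u)}{g(u)}=2R_{\nu}(u)-R_{\nu-1}(u)-R_{\nu+1}(u).$$
Thus $g$ is strictly decreasing exactly when $R_{\nu-1}(u)+R_{\nu+1}(u)>2R_{\nu}(u)$, i.e. when the order-map $\mu\mapsto R_{\mu}(u)=K_{\mu+1}(u)/K_{\mu}(u)$ is strictly convex along $\nu-1,\nu,\nu+1$.

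Establishing this is the main obstacle, and it is precisely the $\rho=1$ instance of the still-open conjecture. A self-contained route to the left-hand inequality that sidesteps the full convexity is to use $K_{\nu+1}=K_{\nu-1}+\tfrac{2\nu}{u}K_{\nu}$ once: dividing $\tfrac{\nu}{\nu-1}[K_{\nu}]^2>K_{\nu-1}K_{\nu+1}=K_{\nu-1}^2+\tfrac{2\nu}{u}K_{\nu-1}K_{\nu}$ by $[K_{\nu}]^2$ reduces everything to the scalar inequality $x^2+\tfrac{2\nu}{u}x<\tfrac{\nu}{\nu-1}$ in $x=K_{\nu-1}(u)/K_{\nu}(u)$, i.e. to the sharp ratio bound $K_{\nu-1}(u)/K_{\nu}(u)<-\nu/u+\sqrt{\nu^2/u^2+\nu/(\nu-1)}$ (the positive root of that quadratic). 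The genuine difficulty is that this bound is \emph{asymptotically exact} as $u\to0^+$, so it admits no slack and must be proved sharply — for instance from the Riccati recurrence $R_{\mu}=2\mu/u+1/R_{\mu-1}$, or from the product representation $K_{\mu-1}(u)K_{\mu+1}(u)-K_{\mu}(u)^2=4\int_0^{\infty}K_{2\mu}(2u\cosh t)\sinh^2 t\,\dt$, which recasts the second-difference inequality as a comparison of weighted integrals of $K_{2\nu}$ against $K_{2\nu+2}$. Either way, sharp control of the ratio $K_{\nu-1}/K_{\nu}$ near the origin is where the real work lies, and it is the same phenomenon that blocks the general conjecture.
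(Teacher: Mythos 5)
Your treatment of the right-hand inequality is complete and coincides with the paper's: specialising \eqref{tkgnew} to $\rho=1$ and using \eqref{re}, the powers of $u$ cancel and one obtains $[K_{\nu}(u)]^2-K_{\nu-1}(u)K_{\nu+1}(u)<0$ for all $\nu\in\mathbb{R}$ and $u>0$. Your limit computations for $g(u)=K_{\nu-1}(u)K_{\nu+1}(u)/[K_{\nu}(u)]^2$ as $u\to0^+$ and $u\to\infty$ are also correct and do establish both sharpness claims, and the logarithmic-derivative identity $g'/g=2R_{\nu}-R_{\nu-1}-R_{\nu+1}$ checks out.

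The left-hand inequality, however, is not actually proved. You reduce it to the strict monotonicity of $g$, and you yourself note that this is precisely the $\rho=1$ instance of the paper's conjecture --- which the authors explicitly say they are unable to prove. That reduction replaces the claim by something strictly stronger (monotonicity of $g$ implies $g<\nu/(\nu-1)$, not conversely), so it cannot serve as a proof. Your fallback --- rewriting the left-hand side as the ratio bound $K_{\nu-1}(u)/K_{\nu}(u)<-\nu/u+\sqrt{\nu^2/u^2+\nu/(\nu-1)}$ via the three-term recurrence --- is a correct equivalent reformulation, but you then observe that ``the real work lies'' in proving it and stop; no proof of that bound (or of the convexity $R_{\nu-1}+R_{\nu+1}>2R_{\nu}$) is supplied. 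For comparison, the paper does not give a self-contained argument either: it obtains the right-hand side from \eqref{tkgnew} exactly as you do, and for the left-hand side it cites \cite{baricz7} and \cite{segura}, where the required sharp estimate is actually established by other methods. As written, your argument proves the theorem only modulo that unproved key inequality, which is the entire substance of the left-hand side.
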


For the sake of completeness it should be mentioned that the
right-hand side of \eqref{tmb} was first proved by M.E.H. Ismail and
M.E. Muldoon \cite{muldoon}, and later by A. Laforgia and P.
Natalini \cite{natalini} and recently was deduced also by \'A.
Baricz \cite{baricz6,baricz7} and J. Segura \cite{segura}, by using
different approaches. The left-hand side of \eqref{tmb} was deduced
very recently by using completely different methods by \'A. Baricz
\cite{baricz7} and J. Segura \cite{segura}. See also \cite{bepe} for
more details on \eqref{tmb}. Note that the left-hand side of
\eqref{tmb} provides actually an upper bound for the effective
variance of the generalized Gaussian distribution. More precisely,
in \cite{lacis} the authors used (without proof) the inequality
$0<v_{\rm{eff}}<1/(\mu-1)$ for $\mu=\nu+4,$ where
$$v_{\rm{eff}}=\frac{K_{\mu-1}(u)K_{\mu+1}(u)}{\left[K_{\mu}(u)\right]^2}-1$$
is the effective variance of the generalized Gaussian distribution.

\end{document}